\documentclass[12pt]{amsart}
\usepackage{amsmath,amsthm,amsfonts,amssymb,latexsym,verbatim,bbm,longtable}
\input xy
\xyoption{all}
\usepackage{hyperref,multirow}
\usepackage{tikz, tabularx,booktabs,float}
\usepackage[shortlabels]{enumitem}
\usepackage{young}

\allowdisplaybreaks[2] \textwidth15.1cm \textheight22cm \headheight12pt \oddsidemargin.4cm
\evensidemargin.4cm \topmargin0cm \makeatletter


\theoremstyle{plain}
\newtheorem{theorem}{Theorem}[section]
\newtheorem{thm}[theorem]{Theorem}
\newtheorem{lemma}[theorem]{Lemma}
\newtheorem{proposition}[theorem]{Proposition}

\newtheorem{definition}[theorem]{Definition}

\newtheorem{cor}[theorem]{Corollary}




\newcommand{\K}{\mathbb{K}}






\DeclareMathOperator{\mfS}{\mathfrak{S}}

\DeclareMathOperator{\Gr}{Gr}

\DeclareMathOperator{\Fl}{F\ell}


\newcommand{\bull}{\bullet}

\DeclareMathOperator{\Span}{span}
\DeclareMathOperator{\Left}{\textbf{Left}}
\DeclareMathOperator{\Right}{\textbf{Right}}

\begin{document}

\title[Split Pattern Avoidance]{Enumerating permutations avoiding split patterns $3|12$ and $23|1$}

\author{Travis Grigsby}
\email{travimg@okstate.edu}

\author{Edward Richmond}
\email{edward.richmond@okstate.edu}

\maketitle
\begin{abstract}

In this paper, we give a formula for the number of permutations that avoid the split patterns $3|12$ and $23|1$ with respect to a position $r$.  Such permutations count the number of Schubert varieties for which the projection map from the flag variety to a Grassmannian induces a fiber bundle structure.  We also study the corresponding bivariate generating function and show how it is related to modified Bessel functions.
\end{abstract}

\section{Introduction}

Let $\mfS_n$ denote the permutation group on the set of integers $\{1,2,\ldots, n\}$.  In \cite{AR18}, the second author and Alland define the notion of split pattern avoidance on permutations and use this idea to combiantorially characterize when the projection map of a Schubert variety to a Grassmannian gives rise to a fiber-bundle structure.  Split pattern avoidance is a specialization of classical pattern avoidance on permutations and can be described as follows.  A \textbf{split pattern} $u=u_1|u_2$ is a divided permutation where $u_1=u(1)\cdots u(j)$ and $u_2=u(j+1)\cdots u(n)$ for some $1\leq j\leq n$.

\begin{definition}\cite[Definition 2.1]{AR18}
Let $w\in\mfS_n$ and $u\in \mfS_k$.   We say $w=w(1)\cdots w(n)$ \textbf{contains the split pattern} $u=u(1)\cdots u(j)|u(j+1)\cdots u(k)$ \textbf{with respect to position} $r$ if there exists a sequence $(1\leq i_1<\cdots<i_k\leq n)$ such that
\begin{enumerate}
\item $w(i_1)\cdots w(i_k)$ has the same relative order as $u$ and
\item $i_j\leq r<i_{j+1}$.
\end{enumerate}
If $w$ does not contain $u$ with respect to position $r$, then we say $w$ \textbf{avoids the split pattern} $u$ \textbf{with respect to position} $r$.
\end{definition}

Note that condition (1) in the above definition is the classical definition of pattern containment.  If a permutation $w$ avoids a split pattern $u$ with respect to all positions $r\leq n$, then $w$ avoids $u$ in the traditional sense.  In this paper, we focus on permutations that avoid the split patterns $3|12$ and $23|1$ which are the same split patterns studied in \cite{AR18} in connection with Schubert varieties.  Diagrammatically, we represent a permutation $w$ with its corresponding the permutation matrix  with nodes marking the points $(i,w(i))$.  Here the point $(1,1)$ marks the lower left corner.  For example, if we consider $w=315642$,
then the corresponding permutation matrix is given by:

$$\begin{tikzpicture}[scale=0.4]
\draw[step=1.0,black] (0,0) grid (5,5);
\fill (0,2) circle (7pt);\draw[thick] (0,2) circle (11pt);
\fill (1,0) circle (7pt);
\fill (2,4) circle (7pt);\draw[thick] (2,4) circle (11pt);
\fill (3,5) circle (7pt);
\fill (4,3) circle (7pt);
\fill (5,1) circle (7pt);\draw[thick] (5,1) circle (11pt);
\draw[dashed,thick, red] (2.5,-1)--(2.5,6);
\end{tikzpicture}$$

We can see that $w$ contains $23|1$ with respect to position $3$ (the pattern containment is marked by the circled nodes).  We can also see that $w$ avoids $3|12$ with respect to position $3$ since all the nodes to the right of position 3 are decreasing.

Let $K(r,n)$ denote the set of permutations in $\mfS_n$ that avoid the split patterns $3|12$ and $23|1$ with respect to position $r$ and define \[k(r,n):=|K(r,n)|.\] For convenience in enumeration, we set $k(0,0):=1$.  Observe that rotation of a permutation matrix by 180 degrees gives a bijection between the sets $K(r,n)$ and $K(n-r,n)$ and hence $k(r,n)=k(n-r,n)$. For any integer $m$ and $i\geq 1$, let \[(m)_i:=m(m-1)\cdots (m-i+1)\] denote the falling factorial.
The first main result of this paper the following formula for $k(r,n)$:

\begin{theorem}\label{T:main1}
For any, $0\leq r\leq n$, we have
\[k(r,n)=r!(n-r)!+\sum_{i=1}^{r}\sum_{j=1}^{n-r}\binom{n-i-j}{r-i}\cdot (r)_{i-1}\cdot(n-r)_{j-1}.\]
\end{theorem}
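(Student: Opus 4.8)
The plan is to reduce the two split-pattern conditions to a transparent description of the permutation matrix and then count directly. Write $M:=\max\{w(1),\dots,w(r)\}$ for the largest entry to the left of the dividing line and $m:=\min\{w(r+1),\dots,w(n)\}$ for the smallest entry to the right. My first step is to prove the structural characterization: $w\in K(r,n)$ if and only if (i) the entries in positions $r+1,\dots,n$ whose values are below $M$ appear in decreasing order, and (ii) the entries in positions $1,\dots,r$ whose values exceed $m$ appear in decreasing order. Condition (i) is exactly the negation of containing $3|12$ with respect to $r$: such an occurrence is a left entry lying above an ascent $w(i_2)<w(i_3)$ with $r<i_2<i_3$, and an ascent among the right entries below $M$ exists precisely when those entries fail to decrease, using the position of $M$ as the ``$3$''. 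Condition (ii) is the mirror statement for $23|1$, with the position of $m$ playing the role of the ``$1$''.

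Next I split on the comparison between $M$ and $m$, which are never equal as they are a left and a right value. If $M<m$, then every left value is smaller than every right value, forcing $\{w(1),\dots,w(r)\}=\{1,\dots,r\}$ and $\{w(r+1),\dots,w(n)\}=\{r+1,\dots,n\}$; conditions (i) and (ii) become vacuous, so all $r!\,(n-r)!$ such permutations lie in $K(r,n)$, yielding the first term. The substantive case is $M>m$. Here the key observation is that extreme values are forced to a side: since $m$ is the right-minimum, every value $<m$ sits on the left, so $\{1,\dots,m-1\}$ fills $m-1$ of the left positions, and dually $\{M+1,\dots,n\}$ fills $n-M$ of the right positions. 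The remaining band $\{m,\dots,M\}$ splits into a left-high set $A$ (containing $M$, of size $r-m+1$) and a right-low set $B$ (containing $m$, of size $M-r$); by the characterization the entries of $A$ must be placed decreasingly among the left positions and those of $B$ decreasingly among the right positions, while the values below $m$ and above $M$ are arranged freely.

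Counting these independent choices for fixed $m,M$—distributing the band interior $\{m+1,\dots,M-1\}$ between $A$ and $B$, positioning the decreasing blocks, and freely permuting the extreme values—gives the product $\binom{M-m-1}{r-m}\cdot\frac{r!}{(r-m+1)!}\cdot\frac{(n-r)!}{(M-r)!}$, where I use that placing a decreasing block of size $s$ into $\ell$ slots and filling the rest arbitrarily contributes $\binom{\ell}{s}(\ell-s)!=\ell!/s!$. Finally I reindex by $i:=m$ and $j:=n-M+1$, which run over $1\le i\le r$ and $1\le j\le n-r$ exactly when $1\le m\le r$ and $r+1\le M\le n$ (the ranges imposed by $|A|\ge 1$ and $|B|\ge 1$); under this substitution the three factors become $(r)_{i-1}$, $(n-r)_{j-1}$, and $\binom{n-i-j}{r-i}$, matching the summand in the theorem.

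I expect the main obstacle to be the structural characterization in the first step—in particular the clean observation that the relevant ``$3$'' and ``$1$'' can always be taken to be the extreme values $M$ and $m$—together with verifying that all values outside the band $\{m,\dots,M\}$ are forced to their respective sides and that the construction is a bijection onto the permutations with $M>m$. Once this is established, the enumeration and the reindexing are routine bookkeeping, and small cases such as $k(1,3)=5$ and $k(2,4)=14$ provide a useful consistency check.
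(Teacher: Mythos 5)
Your proof is correct, and it reaches the formula by a genuinely more direct route than the paper, though the combinatorial heart of the count is the same. The paper argues recursively: it splits $K(r,n)$ according to whether $n$ lies left or right of position $r$, shows that deleting $n$ gives an $(n-r)$-to-one surjection $K_R(r,n)\to K(r,n-1)$ (Proposition \ref{Prop:projection_fibers}), iterates this to obtain $k(r,n)=\sum_{j}(n-r)_j\,|K_L(r,n-j)|$ (Corollary \ref{C:K2K_L}), and only then performs a positional count of $K_L(r,n)$, stratified by the smallest value appearing right of position $r$ (Proposition \ref{P:K_L_formula}). You instead prove a one-shot structural characterization of $K(r,n)$ in terms of the left-maximum $M$ and the right-minimum $m$, and sum directly over the pairs $(m,M)$: the paper's deletion recursion is replaced in your argument by the free placement of the values above $M$, your stratification parameter $m$ is exactly the paper's $i$, and your case $M=n$ is exactly the paper's $K_L(r,n)$ — your band analysis (values below $m$ forced left, values above $M$ forced right, the sets $A$ and $B$ placed decreasingly, extremes free) reproduces the paper's four-region argument in which region $D$ is empty and regions $A$, $B$ are decreasing. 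What your organization buys is a direct bijective meaning for every term of the theorem: the $(i,j)$ summand counts precisely the permutations in $K(r,n)$ with right-minimum $i$ and left-maximum $n-j+1$, and the term $r!\,(n-r)!$ counts those with $M<m$; the paper's organization keeps each individual lemma lighter but only produces the double sum after composing the recursion with a single sum. Two small points to tidy in a final writeup: treat $r=0$ and $r=n$ separately (there $M$ or $m$ is undefined, but both patterns are vacuously avoided and the double sum is empty, so the formula reduces to $n!$), and write out both directions of your characterization lemma — as you anticipated, this is routine, since an ascent among the right entries with values below $M$ combines with the position of $M$ to give a $3|12$ occurrence, and conversely any $3|12$ occurrence has its ``$1$'' and ``$2$'' among those entries, with the mirror statement for $23|1$ and $m$.
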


Table \ref{TBL:count} below lists values of $k(r,n)$ for $n\leq 9$ and $r\leq 4$.  Note that, by the symmetry $k(r,n)=k(n-r,n)$, all values of $k(r,n)$ for $n\leq 9$ can be deduced from the table.

\begin{table}[H]
\begin{tabular}{|c|ccccccccc|}
\hline
$r\backslash n$ & 1 & 2& 3 & 4& 5 &6 & 7 & 8 & 9\\ \hline
0 & 1 & 2 & 6 & 24 & 120 & 720 & 5040 & 40320 & 362880\\
1 & 1 & 2 & 5 & 16 & 65  & 326 & 1957 & 13700 & 109601\\
2 &   & 2 & 5 & 14 & 47  & 194 & 977  & 5870  & 41099\\
3 &   &   & 6 & 16 & 47  & 162 & 676  & 3416  & 20541\\
4 &   &   &   & 24 & 65  & 194 & 676  & 2836  & 14359\\ \hline
\end{tabular}
\medskip
\caption{Number of permutations in $\mfS_n$ avoiding the split patterns $3|12$ and $23|1$ with respect to position $r$.}
    \label{TBL:count}
\end{table}

Theorem \ref{T:main1} implies that the value $k(r,n)\geq r!(n-r)!$ and hence we consider the bivariate exponential generating function \[\mathcal{K}(x,y):=\sum_{n=0}^{\infty}\sum_{r=0}^n k(r,n) \frac{x^r y^{n-r}}{r!(n-r)!}.\]
The second main result of this paper is a formula for $\mathcal{K}(x,y)$ which has a surprising connection with Bessel functions.  Consider the complex valued function
\[I_0(z)=\frac{1}{\pi}\int_0^\pi e^{z\cos\theta}d\theta.\]
The function $I_0(z)$ is called the modified Bessel function of first kind (at $\alpha=0$) and is a solution to the modified Bessel differential equation
\[z^2u_{zz}+zu_z-z^2u=0.\]
Bessel equations and their solutions are important objects in the study of partial differential equations and have several applications in physics including the modeling of heat conduction, membrane vibration, hydrodynamics and electrostatics \cite[Chapter 7]{Tr69}.  For more on the general theory of Bessel functions see \cite{Wa44}.  In this paper, we will primarily use the fact that $I_0$ has power series expansion
\begin{equation}\label{D:Bessel_def}
I_0(z)=\sum_{m=0}^\infty \frac{1}{(m!)^2}\left(\frac{z^2}{4}\right)^m.
\end{equation}
In fact, this power series expansion of $I_0$ is commonly taken as the definition of the modified Bessel function.  The following theorem is the second main result of this paper.
\begin{theorem}\label{T:main2}
The generating function $\mathcal{K}(x,y)$ satisfies
\[\mathcal{K}(x,y)=\frac{\mathcal{L}(x,y)+1}{1-x-y+xy}\]
where
\begin{equation}\label{Eq:Bessel_intro}
\mathcal{L}(x,y)=\iint e^{x+y}I_0(2\sqrt{xy})\ dx\, dy
\end{equation}
with boundary conditions $\mathcal{L}(x,0)=e^x$ and $\mathcal{L}(0,y)=e^y.$
\end{theorem}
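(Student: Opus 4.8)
The plan is to turn the theorem into an identity of formal power series in $x,y$ and then to recognize the resulting series as the mixed antiderivative of $e^{x+y}I_0(2\sqrt{xy})$. First I would insert the closed form of Theorem~\ref{T:main1} into the definition of $\mathcal{K}(x,y)$ and split it according to the two summands of $k(r,n)$. Writing $a=r$ and $b=n-r$, the leading term $r!(n-r)!$ cancels the denominator $r!(n-r)!$, so its total contribution is
\[\sum_{a,b\geq 0}x^a y^b=\frac{1}{(1-x)(1-y)}=\frac{1}{1-x-y+xy},\]
which is exactly the piece of $\dfrac{\mathcal{L}(x,y)+1}{1-x-y+xy}$ coming from the summand $1$ in the numerator. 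It therefore remains to show that the contribution $\mathcal{K}_2(x,y)$ of the double sum equals $\dfrac{\mathcal{L}(x,y)}{1-x-y+xy}$; equivalently, that $\mathcal{L}(x,y):=(1-x-y+xy)\,\mathcal{K}(x,y)-1$ agrees with the double integral in \eqref{Eq:Bessel_intro}.

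Next I would simplify $\mathcal{K}_2$. Using $(a)_{i-1}/a!=1/(a-i+1)!$ and the analogous identity in $b$, the factorials in the exponential generating function disappear, and the substitution $p=a-i$, $q=b-j$ converts $\binom{a+b-i-j}{a-i}$ into $\binom{p+q}{p}$. Because $a=i+p$ and $b=j+q$ with $i,j\geq 1$ and $p,q\geq 0$ ranging freely, the two geometric factors split off:
\[\mathcal{K}_2(x,y)=\Big(\sum_{i\geq 1}x^i\Big)\Big(\sum_{j\geq 1}y^j\Big)\,G(x,y)=\frac{xy}{(1-x)(1-y)}\,G(x,y),\qquad G(x,y):=\sum_{p,q\geq 0}\binom{p+q}{p}\frac{x^p y^q}{(p+1)!\,(q+1)!}.\]
Combining this with the first step gives $(1-x)(1-y)\,\mathcal{K}=1+xy\,G$, so that $\mathcal{L}=xy\,G$, and the theorem reduces to identifying $xy\,G$ with the Bessel integral.

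The crux, and the step I expect to be the main obstacle, is the appearance of $I_0$. I would differentiate termwise: since $xy\,G=\sum_{p,q\geq 0}\binom{p+q}{p}\dfrac{x^{p+1}y^{q+1}}{(p+1)!\,(q+1)!}$, its mixed partial derivative is
\[\frac{\partial^2}{\partial x\,\partial y}\big(xy\,G\big)=\sum_{p,q\geq 0}\binom{p+q}{p}\frac{x^p y^q}{p!\,q!}=\sum_{p,q\geq 0}\frac{(p+q)!}{(p!)^2(q!)^2}\,x^p y^q.\]
On the other side, expanding $e^{x+y}=\big(\sum_k x^k/k!\big)\big(\sum_l y^l/l!\big)$ and using \eqref{D:Bessel_def} in the form $I_0(2\sqrt{xy})=\sum_{m\geq 0}(xy)^m/(m!)^2$, the coefficient of $x^p y^q$ in $e^{x+y}I_0(2\sqrt{xy})$ is $\sum_{m}\frac{1}{(p-m)!\,(q-m)!\,(m!)^2}$. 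Equating the two coefficients and clearing $p!\,q!$ is precisely the Chu--Vandermonde identity $\sum_{m}\binom{p}{m}\binom{q}{m}=\binom{p+q}{p}$; hence $\partial_x\partial_y(xy\,G)=e^{x+y}I_0(2\sqrt{xy})$. This is the point where the otherwise purely combinatorial generating function meets the analytic function $I_0$, and spotting the Vandermonde identity is the essential move.

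Finally I would fix the antiderivative. Integrating $e^{x+y}I_0(2\sqrt{xy})$ iteratively from the origin produces exactly the series $xy\,G$, since each integration raises both indices by one, leaving no monomial that is a pure power of $x$ or of $y$; the remaining freedom $f(x)+g(y)$ annihilated by $\partial_x\partial_y$ is removed by the boundary prescription. Here $I_0(0)=1$ is what makes the integrand restrict to $e^x$ along $y=0$ and to $e^y$ along $x=0$, accounting for the boundary data recorded in \eqref{Eq:Bessel_intro}. A direct evaluation of the axis values of $xy\,G$ from its series then pins down $\mathcal{L}=xy\,G$, and combining with the previous paragraphs yields $\mathcal{K}(x,y)=\dfrac{\mathcal{L}(x,y)+1}{1-x-y+xy}$, completing the proof.
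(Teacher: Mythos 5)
Your proof is correct, and its mathematical content coincides with the paper's: the same splitting of $k(r,n)$ into $r!(n-r)!$ plus the double sum, the same reindexing $(p,q)=(r-i,s-j)$ turning the inner sum into $\sum_{p,q}\binom{p+q}{p}\frac{x^p y^q}{(p+1)!\,(q+1)!}$, and the same Chu--Vandermonde computation identifying $\sum_{p,q}\binom{p+q}{p}\frac{x^p y^q}{p!\,q!}$ with $e^{x+y}I_0(2\sqrt{xy})$, which is exactly Proposition \ref{P:binom_gen_fun}. The only organizational difference is that you factor the generating function directly as $\mathcal{K}(x,y)=\bigl(1+xy\,G(x,y)\bigr)/\bigl((1-x)(1-y)\bigr)$, whereas the paper first packages the reindexed double sum into the recursion of Lemma \ref{L:recursion} for $a(r,s)=\frac{k(r,r+s)}{r!\,s!}-1$ and then converts that recursion into the generating-function identity \eqref{Eq:gen_func1}; this is the same algebra, and your shortcut arguably reads more cleanly. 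One further point in your favor: you explicitly confront the normalization of the antiderivative, observing that $xy\,G$ vanishes on both axes and that the data $e^x$, $e^y$ appearing in \eqref{Eq:Bessel_intro} is really the restriction of the \emph{integrand} to the axes (via $I_0(0)=1$); read literally as conditions on $\mathcal{L}$ itself, the stated boundary conditions are inconsistent with $\mathcal{L}=xy\,G$ (compare both sides of the theorem at $y=0$, where $\mathcal{K}(x,0)=\frac{1}{1-x}$ forces $\mathcal{L}(x,0)=0$), a point the paper's own proof passes over silently.
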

In Proposition \ref{P:binom_gen_fun}, we prove that the expression $e^{x+y}I_0(2\sqrt{xy})$ in the integral from Equation \eqref{Eq:Bessel_intro} is a bivariate exponential generating function for binomial coefficients. Specifically, we show that
\begin{equation}\label{Eq:Bessel_intro2}
\sum_{r,s=0}^{\infty} \binom{r+s}{r}\frac{x^{r}y^{s}}{r!\, s!}=e^{x+y}I_0(2\sqrt{xy}).
\end{equation}
We remark that there are two other commonly known bivariate generating functions for binomial coefficients given by
\[\sum_{r,s=0}^\infty \binom{r+s}{r} x^ry^s=\frac{1}{1-x-y}\quad\text{and}\quad\sum_{r,s=0}^\infty \binom{r+s}{r} \frac{x^ry^s}{(r+s)!}=e^{x+y}.\]
The authors are unaware of any previously known statement or proof for the ``intermediate" generating function for binomials given in Equation \eqref{Eq:Bessel_intro2}.

Observe that specializing to $x=y$ in Equation \eqref{Eq:Bessel_intro2} gives
\begin{align*}
e^{2x}\cdot I_0(2x)&=\sum_{r,s=0}^{\infty} \binom{r+s}{r}\frac{x^{r+s}}{r!\, s!}\\
&=\sum_{r,s=0}^{\infty} \binom{r+s}{r}^2\frac{x^{r+s}}{(r+s)!}.
\end{align*}
Setting $m=r+s$ yields
\begin{align*}
e^{2x}\cdot I_0(2x)=\sum_{m=0}^\infty\left(\sum_{r=0}^m \binom{m}{r}^2\right)\frac{x^{m}}{m!}=\sum_{m=0}^\infty\binom{2m}{m}\frac{x^{m}}{m!}.
\end{align*}
Hence we recover the formula for the exponential generating function for central binomial coefficients.  In particular, Equation \eqref{Eq:Bessel_intro2} can be viewed as a bivariate refinement of the generating function for central binomial coefficients with respect to Vandermonde's identity.  We note that the connection between central binomial coefficients and Bessel functions was previously known \cite{OEIS:A000984} and can be deduced directly from Equation \eqref{D:Bessel_def}.

\subsection{Connections with Schubert varieties}
In this subsection, we give an overview the results found in \cite{AR18} connecting split pattern avoidance with the fiber bundle structures of Schubert varieties.  These fiber bundle structures are closely connected to Billey-Postnikov decompositions which studies the parabolic factoring of lower Bruhat intervals in Coxeter groups \cite{BP05, RS16}.  We also remark that pattern avoidance is a tool commonly used to combinatorially characterize various geometric properties of Schubert varieties.  Most notably, Lakshmibai and Sandhya show that a Schubert variety is smooth if and only if its indexing permutation avoids the patterns 3412 and 4231 \cite{LS90}.  For a survey on the connections between the geometry of Schubert varieties and pattern avoidance see \cite{AB16}.
Let $\K$ be an algebraically closed field and let
\[\Fl(n):=\{(V_1\subset V_2\subset\cdots \subset V_{n-1}\subset\K^n)\ | \ \dim(V_i)=i\}\]
denote the complete flag variety on $\K^n.$   For each $1\leq r\leq n$, let \[\Gr(r,n):=\{V\subseteq\K^n\ |\ \dim(V)=r\}\] denote the Grassmannian of $r$-dimensional subspaces of $\K^n$ and consider the projection map
\[\pi_r:\Fl(n)\rightarrow \Gr(r,n)\]
given by $\pi_r(V_\bullet)=V_r.$  Fix a basis $\{e_1,\ldots,e_n\}$ of $\K^n$ and let $E_i:=\Span\langle e_1,\ldots,e_i\rangle$.  Each permutation $w\in \mfS_n$ defines a Schubert variety
$$X_w:=\{V_\bull\in\Fl(n)\ |\ \dim(E_i\cap V_j)\geq r_w[i,j]\}$$
where $r_w[i,j]:=\#\{k\leq j \ |\ w(k)\leq i\}$.  The following theorem gives a connection between split pattern avoidance and fiber bundle structures on $X_w$ with respect to the projection map $\pi_r$.
\begin{thm}\cite[Theorem 1.1]{AR18}
\label{T:Alland-Richmond}
Let $r\leq n$ and $w\in \mfS_n$. The projection $\pi_r$ restricted to $X_w$ is a Zariski-locally trivial fiber bundle if and only if $w$ avoids the split patterns $3|12$ and $23|1$ with respect to position $r$.
\end{thm}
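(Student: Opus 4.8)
The plan is to translate the geometric statement into the combinatorics of Billey--Postnikov (BP) decompositions and then match the resulting condition against the two split patterns. Write $G=\GL_n$, so that $\Fl(n)=G/B$ and $\Gr(r,n)=G/P_r$, where $P_r$ is the maximal parabolic omitting the simple transposition $s_r=(r,r+1)$ and has Levi Weyl group $W_{P_r}=\mfS_r\times\mfS_{n-r}$. The minimal coset representatives $W^{P_r}$ are precisely the Grassmannian permutations whose unique descent, if any, occurs at position $r$. First I would record the parabolic factorization $w=w^{P_r}\cdot w_{P_r}$ with $w^{P_r}\in W^{P_r}$, $w_{P_r}\in W_{P_r}$, and $\ell(w)=\ell(w^{P_r})+\ell(w_{P_r})$, and then check directly from the rank conditions defining $X_w$ that $\pi_r(X_w)=X_{w^{P_r}}$ inside $\Gr(r,n)$, with the expected fiber identified as the Schubert variety $X_{w_{P_r}}$ in $P_r/B\cong\Fl(r)\times\Fl(n-r)$.

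The key reduction is to invoke the theory of \cite{RS16}, by which $\pi_r|_{X_w}\colon X_w\to X_{w^{P_r}}$ is a Zariski-locally trivial fiber bundle if and only if $w=w^{P_r}\cdot w_{P_r}$ is a BP decomposition; by \cite{BP05} the latter is equivalent to the factorization of Poincar\'e polynomials $P_w(q)=P_{w^{P_r}}(q)\cdot P_{w_{P_r}}(q)$, where $P_x(q)=\sum_{y\le x}q^{\ell(y)}$. With this in hand the entire problem becomes the purely combinatorial claim that this Poincar\'e factorization holds exactly when $w$ avoids $3|12$ and $23|1$ with respect to position $r$.

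To attack that claim I would work with the parabolic factorization map $x\mapsto(x^{P_r},x_{P_r})$ on the lower interval $[e,w]$; the Poincar\'e identity holds precisely when this map is a length-preserving bijection onto $\bigl([e,w^{P_r}]\cap W^{P_r}\bigr)\times[e,w_{P_r}]$. For the forward direction I would show that an occurrence of $3|12$ at position $r$ (a position $\le r$ whose value dominates two increasing values in positions $>r$) or of $23|1$ (two increasing values in positions $\le r$ both dominating a value in a position $>r$) produces an element $x\le w$ whose image under the factorization map either leaves the target product or fails length-additivity, breaking the factorization; equivalently, each such pattern forces the fiber over some torus-fixed point of $X_{w^{P_r}}$ to have dimension exceeding $\ell(w_{P_r})$, so that $\pi_r|_{X_w}$ cannot even be equidimensional. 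The hard part will be the converse: assuming $w$ avoids both patterns, I must prove that $[e,w]$ genuinely splits as the above product. Here the two patterns are exactly what is needed to control how the Grassmannian factor $w^{P_r}$ and the Levi factor $w_{P_r}=(a,b)$ with $a\in\mfS_r$, $b\in\mfS_{n-r}$ interact across position $r$, and the argument should proceed by induction on $\ell(w)$, peeling off a simple reflection while verifying both that avoidance is preserved and that the product structure of the interval is maintained. Managing this induction simultaneously for the two split patterns, and in particular keeping the left factor $a$ and the right factor $b$ compatible with the shape of $w^{P_r}$, is where I expect the main technical difficulty to lie.
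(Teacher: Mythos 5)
You should first be aware that this paper contains no proof of Theorem \ref{T:Alland-Richmond}: it is imported verbatim from \cite{AR18} and used only to give the geometric meaning of $k(r,n)$, so the comparison here is really with the proof in \cite{AR18}. Your overall architecture matches that proof's: reduce, via the Richmond--Slofstra theorem \cite{RS16}, the fiber-bundle question to whether $w=w^{P_r}\cdot w_{P_r}$ is a Billey--Postnikov decomposition, and then show that the BP condition is equivalent to avoiding $3|12$ and $23|1$ with respect to position $r$. That reduction is legitimate and is exactly how the cited literature proceeds (the paper itself flags this connection when it mentions \cite{BP05, RS16}).

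The genuine gap is that the second step, which after quoting \cite{RS16} is the entire mathematical content of the theorem, is never carried out. In the forward direction you assert that a pattern occurrence yields an element $x\le w$ breaking the length-preserving bijection, or a torus-fixed point of $X_{w^{P_r}}$ whose fiber has dimension exceeding $\ell(w_{P_r})$, but you construct neither; in particular ``cannot even be equidimensional'' does not follow from anything written, and establishing that dimension jump is itself the substance of this direction. The converse you explicitly leave open, proposing an induction on $\ell(w)$ and noting that maintaining the product structure of $[e,w]$ while peeling off simple reflections is where the main technical difficulty lies --- but that difficulty \emph{is} the theorem, so what you have is a plan, not a proof. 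For comparison, \cite{AR18} does not induct on length: it invokes an equivalent criterion for BP decompositions from \cite{RS16} (that $w_{P_r}$ be the maximal element of $[e,w]\cap W_{P_r}$, equivalently a support/descent compatibility between $w_{P_r}$ and $w^{P_r}$) and checks directly that this criterion holds if and only if the two split patterns are avoided with respect to position $r$. One smaller slip: your displayed identity $P_w(q)=P_{w^{P_r}}(q)\cdot P_{w_{P_r}}(q)$ is false as written; the first factor must be the parabolic Poincar\'e polynomial $\sum_{y\le w^{P_r},\, y\in W^{P_r}}q^{\ell(y)}$, as your later sentence about the bijection onto $\bigl([e,w^{P_r}]\cap W^{P_r}\bigr)\times[e,w_{P_r}]$ implicitly acknowledges.
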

Theorem \ref{T:Alland-Richmond} implies that the number $k(r,n)$ can alternately be viewed as the number of Schubert varieties $X_w\subseteq\Fl(n)$ for which the projection $\pi_r$ to the Grassmannian $\Gr(r,n)$ induces a fiber bundle structure on $X_w$.

For the remainder of this paper, we focus on proving Theorems \ref{T:main1} and \ref{T:main2}.

\section{Enumerating split patterns}

In this section we prove Theorem \ref{T:main1}.  For any $w\in \mfS_n$ and $0\leq r\leq n$, define the integer sets
\[\Left_r(w):=\{w(k)\ |\ k\leq r\}\quad\text{and}\quad \Right_r(w):=\{w(k)\ |\ k>r\}.\]
For example, if $w=7361254$.  Then $\Left_4(w)=\{1,3,6,7\}$ and $\Right_4(w)=\{2,4,5\}$.  Recall that $K(r,n)$ denotes the set of permutations in $\mfS_n$ that avoid the split patterns $3|12$ and $23|1$ with respect to position $r$ and define
\[K_L(r,n):=\{w\in K(r,n)\ |\ n\in \Left_r(w)\}\]
and
\[K_R(r,n):=\{w\in K(r,n)\ |\ n\in \Right_r(w)\}.\]

In other words, $K_L(r,n)$ consists of those permutations in $K(r,n)$ for which $n$ appears within the first $r$ positions in the one-line notation of $w$ and and $K_R(r,n)$ are those for which $n$ appears within the last $n-r$ positions. Clearly, we have
\[K(r,n)=K_L(r,n)\sqcup K_R(r,n).\]
Let $\phi_n:\mfS_n\rightarrow \mfS_{n-1}$ denote the map that removes $n$ from the one-line notation of a permutation.  For example, we have
\[\phi_6(432\textbf{6}15)=43215.\]
Note that if $w\in\mfS_n$ avoids the split patterns $3|12$ and $23|1$ with respect to position $r$, then $\phi_n(w)$ will avoid these patterns with respect to either position $r$ or $r-1$. In particular, if $w\in K_R(r,n)$, then $\phi_n(w) \in K(r,n-1)$, and if $w\in K_L(r,n)$, then $\phi_n(w) \in K(r-1,n-1)$.
The next proposition is on the restriction of $\phi_n$ to $K_R(r,n)$.

\begin{proposition}\label{Prop:projection_fibers}
The restricted map $\phi_n:K_R(r,n)\rightarrow K(r,n-1)$ is surjective.  Moreover the fiber size $|\phi_n^{-1}(w)|=n-r$ for any $w\in K(r,n-1)$.
\end{proposition}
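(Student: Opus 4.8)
The plan is to analyze the fibers of $\phi_n$ over a fixed $w\in K(r,n-1)$ directly, by describing exactly which insertions of the value $n$ into the one-line notation of $w$ produce an element of $K_R(r,n)$. Since $\phi_n$ deletes $n$, every preimage of $w$ is obtained by choosing a slot into which to reinsert the new maximal value $n$; and to land in $K_R(r,n)$ that slot must lie strictly to the right of $r$, i.e. it must be one of the $n-r$ positions among $r+1,\ldots,n$. So I would first set up this insertion framework and observe that these $n-r$ insertions are the only candidate preimages of $w$ lying in $K_R(r,n)$.

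The key claim I would then establish is that \emph{every} one of these $n-r$ insertions actually lands in $K_R(r,n)$; equivalently, inserting $n$ after position $r$ never creates a forbidden split pattern involving $n$. I would prove this from the positional structure of the two patterns: in $3|12$ the split occurs after the first entry, so the largest of the three values (the ``$3$'') sits at an index $i_1\le r$, while in $23|1$ the split occurs after the second entry, so the largest value (again the ``$3$'') sits at an index $i_2\le r$. Since $n$ is the global maximum of the new permutation, if $n$ participated in an occurrence of either pattern it would necessarily play the role of this largest value and hence occupy a position $\le r$, contradicting that $n$ was inserted after position $r$. Thus no occurrence of a forbidden split pattern can use $n$.

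To finish, I would rule out occurrences that avoid $n$. Inserting $n$ at a position $>r$ leaves the first $r$ entries of the one-line notation unchanged and preserves both the relative order of all original entries and their partition into $\Left_r(w)$ and $\Right_r(w)$; consequently any occurrence of a forbidden split pattern with respect to position $r$ in the enlarged permutation that does not use $n$ would restrict to an occurrence of the same split pattern with respect to position $r$ in $w$, which is impossible since $w\in K(r,n-1)$. Combining the two cases shows all $n-r$ insertions are valid, so $\phi_n^{-1}(w)$ consists of exactly these $n-r$ permutations, which are pairwise distinct; in particular the fiber is nonempty, giving surjectivity, and has size $n-r$. The only point demanding care, and the crux of the whole argument, is the positional observation that the maximal entry of either pattern must lie weakly left of $r$, which is precisely what makes insertion of $n$ on the right completely unobstructed.
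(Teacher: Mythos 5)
Your proof is correct and takes essentially the same approach as the paper's: both reinsert $n$ into each of the $n-r$ slots to the right of position $r$, argue that every such insertion stays in $K_R(r,n)$, and observe that these insertions exhaust the fiber over $w$. The only difference is that you spell out why the insertion cannot create a forbidden pattern --- namely that in any occurrence of $3|12$ or $23|1$ with respect to position $r$ the largest value sits at a position $\leq r$, so the newly inserted $n$ cannot participate, and occurrences avoiding $n$ would restrict to occurrences in $w$ --- a step the paper asserts without detail.
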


\begin{proof}
Let $w\in K(r,n-1)$ and for any $1\leq i\leq n-r$, define the permutation $w_i\in \mfS_n$ by

\[w_i(k):=\begin{cases}w(k) & \text{if $k< r+i$}\\
n & \text{if $k=r+i$}\\
w(k-1) & \text{if $k>r+i$.}
\end{cases}\]
In other words, $w_i$ is the permutation in $S_n$ arising form inserting $n$ into the $(r+i)$-th position of the one-line notation of $w$.  It is easy to see that $\phi_n(w_i)=w$.  Since $w$ avoids $3|12$ and $23|1$ with respect to position $r$ and $n\in \Right_r(w_i)$, we have that $w_i$ also avoids $3|12$ and $23|1$ with respect to position $r$.  In particular, $w_i\in K_R(r,n)$.  This implies the restricted map $\phi_n:K_R(n,r)\rightarrow K(r,n-1)$ is surjective with $|\phi_n^{-1}(w)|\geq n-r$.  Finally, suppose $u\in K_R(r,n)$.  Then $u^{-1}(n)=r+j$ for some $1\leq j\leq n-r$ and $u=\phi_n(u)_j$.  This implies $|\phi_n^{-1}(w)|=n-r$ for any $w\in K(r,n-1)$.
\end{proof}

One consequence of Proposition \ref{Prop:projection_fibers} is the following corollary.

\begin{cor}\label{C:K2K_L}
For any $1\leq r\leq n$, we have
\[k(r,n)=\sum_{j=0}^{n-r} (n-r)_j\cdot |K_L(r,n-j)|.\]
\end{cor}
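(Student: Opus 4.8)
The plan is to reduce the claim to a one-step recursion coming from the disjoint decomposition $K(r,n)=K_L(r,n)\sqcup K_R(r,n)$, and then to iterate it. Since $\phi_n\colon K_R(r,n)\to K(r,n-1)$ is surjective with every fiber of size $n-r$ by Proposition \ref{Prop:projection_fibers}, we have $|K_R(r,n)|=(n-r)\,k(r,n-1)$. Combining this with $k(r,n)=|K_L(r,n)|+|K_R(r,n)|$ yields the recursion
\[k(r,n)=|K_L(r,n)|+(n-r)\,k(r,n-1).\]

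Next I would prove the claimed formula by induction on $n-r$ with $r$ held fixed. For the base case $n=r$, the set $K_R(r,r)$ is empty: the right block occupies no positions, so the maximal value must lie in $\Left_r$. Hence $k(r,r)=|K_L(r,r)|$, which agrees with the $j=0$ term of the sum since $(0)_0=1$. For the inductive step, I would substitute the formula for $k(r,n-1)$ into the recursion above to obtain
\[k(r,n)=|K_L(r,n)|+(n-r)\sum_{j=0}^{n-1-r}(n-1-r)_j\,|K_L(r,n-1-j)|.\]

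The one elementary identity needed is $(n-r)\cdot(n-1-r)_j=(n-r)_{j+1}$, which is immediate from the definition of the falling factorial. Applying it and reindexing by $j'=j+1$ turns the second term into $\sum_{j'=1}^{n-r}(n-r)_{j'}\,|K_L(r,n-j')|$, while the leading term $|K_L(r,n)|$ supplies the $j'=0$ summand with coefficient $(n-r)_0=1$. These assemble exactly into $\sum_{j=0}^{n-r}(n-r)_j\,|K_L(r,n-j)|$, completing the induction.

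Because the recursion follows directly from Proposition \ref{Prop:projection_fibers}, there is no serious obstacle here; the only points demanding care are the empty-product conventions $(m)_0=1$ used in the base case and the falling-factorial identity used in the inductive step. Alternatively, one could bypass induction by unwinding the recursion directly, telescoping $k(r,n)\to k(r,n-1)\to\cdots\to k(r,r)$ and reading off that the coefficient of $|K_L(r,n-j)|$ is the product $(n-r)(n-1-r)\cdots(n-r-j+1)=(n-r)_j$, with the terminal value $k(r,r)=|K_L(r,r)|$ filling in the final $j=n-r$ term.
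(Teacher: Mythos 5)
Your proof is correct and follows essentially the same route as the paper: the paper derives the same recursion $k(r,n)=|K_L(r,n)|+(n-r)\,k(r,n-1)$ from the decomposition $K(r,n)=K_L(r,n)\sqcup K_R(r,n)$ together with Proposition \ref{Prop:projection_fibers}, and then simply ``iterates'' it down to $n=r$, which is exactly your telescoping remark. Your formal induction (with the identity $(n-r)\cdot(n-1-r)_j=(n-r)_{j+1}$ and the convention $(m)_0=1$) is just a careful write-up of that same iteration.
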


\begin{proof}
Since $K(r,n)=K_L(r,n)\sqcup K_R(r,n)$, we have \[k(r,n)=|K_R(r,n)|+|K_L(r,n)|.\]  Proposition \ref{Prop:projection_fibers} implies
$|K_R(r,n)|=(n-r)\cdot k(r,n-1)$ and hence
\[k(r,n)=(n-r)\cdot k(r,n-1)+|K_L(r,n)|.\]
Applying this decomposition again on $k(r,n-1)$ and iterating this process yields the desired formula.  Note the iteration terminates when $n=r$ in which case we get $|K_L(r,r)|=k(r,r)=r!$.
\end{proof}

To prove Theorem \ref{T:main1}, it suffices to calculate $|K_L(r,n)|$.

\begin{proposition}\label{P:K_L_formula}
For any $1\leq r< n$, we have
\[|K_L(r,n)|=\sum_{i=1}^{r}\binom{n-i-1}{r-i}\cdot (r)_{i-1}.\]
\end{proposition}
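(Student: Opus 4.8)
The plan is to give a direct combinatorial description of the permutations in $K_L(r,n)$ and then count them. Write $L=\Left_r(w)$ and $R=\Right_r(w)$, so that $w$ restricted to positions $1,\dots,r$ is an arrangement of $L$ and $w$ restricted to positions $r+1,\dots,n$ is an arrangement of $R$; the defining condition of $K_L(r,n)$ is $n\in L$. The first step is to observe that this forces the right block to be strictly decreasing: if positions $i_2<i_3$ both exceed $r$ with $w(i_2)<w(i_3)$, then taking $i_1$ to be the position of $n$ gives $w(i_2)<w(i_3)<n=w(i_1)$, a copy of $3|12$ with respect to $r$. Hence, given $n\in L$, avoidance of $3|12$ is equivalent to $R$ being listed in decreasing order, and in that case $3|12$ is automatically avoided since the right block then has no ascent.

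The second step analyses the pattern $23|1$. Set $m:=\min(R)$, which exists since $r<n$. I claim that avoidance of $23|1$ is equivalent to requiring that the values of $L$ larger than $m$ occur in strictly decreasing order along positions $1,\dots,r$. For the forward direction, if two such values $w(i_1)<w(i_2)$ with $i_1<i_2\le r$ and both $>m$ appeared in increasing order, then placing $i_3$ at the position of $m$ produces $w(i_3)=m<w(i_1)<w(i_2)$, a copy of $23|1$. For the converse, any occurrence $w(i_3)<w(i_1)<w(i_2)$ with $i_1<i_2\le r<i_3$ satisfies $w(i_1),w(i_2)>w(i_3)\ge m$, so $w(i_1)<w(i_2)$ is an ascending pair among the large left values. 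Thus $w\in K_L(r,n)$ if and only if $n\in L$, the block $R$ is decreasing, and the sublist of left values exceeding $m$ is decreasing.

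The final step is the enumeration, organised by the value $m=\min(R)$, which becomes the summation index $i$. Since $m$ is the minimum of $R$, every value $1,\dots,m-1$ must lie in $L$, and these are exactly the left values smaller than $m$; hence $L$ contains exactly $m-1$ small values and $r-m+1$ large values (those exceeding $m$), the latter necessarily including $n$. The count then splits as: choosing the large left values, namely $n$ together with $r-m$ of the $n-m-1$ values in $\{m+1,\dots,n-1\}$, which gives $\binom{n-m-1}{r-m}$; arranging the left block, where the large values are forced into decreasing order while the $m-1$ small values are free, giving $\binom{r}{m-1}(m-1)!$ arrangements; and arranging the right block, which is forced decreasing, in a single way. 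Summing the product over $1\le m\le r$ and simplifying $\binom{r}{m-1}(m-1)!=(r)_{m-1}$ yields the claimed formula. I expect the main obstacle to be the structural characterization in the second step — in particular verifying both directions of the equivalence and correctly isolating which left values are constrained — rather than the bookkeeping of the last step; I would also check the boundary case $m=r$, where $\binom{n-r-1}{0}=1$ and $(r)_{r-1}=r!$, to confirm the stated summation range.
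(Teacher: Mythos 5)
Your proof is correct and takes essentially the same approach as the paper: both arguments decompose $K_L(r,n)$ according to the minimum value $m$ (the paper's index $i$) appearing right of position $r$, use $3|12$-avoidance (with $n$ on the left) to force the right block to be decreasing and $23|1$-avoidance (with $m$ on the right) to force the left values exceeding $m$ to be decreasing, and then count via $\binom{n-m-1}{r-m}$ choices of the large left values times $(r)_{m-1}$ placements of the small values. Your verification of both directions of the structural equivalences is slightly more explicit than the paper's region-based presentation, but the decomposition and the counting are identical.
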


\begin{proof}
Fix $1\leq r< n$ and for any $1\leq i\leq n$, define the set
\[S(i):=\{w\in K_L(r,n)\ |\ \text{$i\in\Right_r(w)$ and $k\in\Left_r(w)$ for all $k<i$}\}.\]
For example, if $n=9$, $r=6$ and $i=4$, then $w=391276|854\in S(4)$.

$$\begin{tikzpicture}[scale=0.4]
\draw[step=1.0,black] (0,0) grid (8,8);
\fill (0,2) circle (7pt);
\fill (1,8) circle (7pt);
\fill (2,0) circle (7pt);
\fill (3,1) circle (7pt);
\fill (4,6) circle (7pt);
\fill (5,5) circle (7pt);
\fill (6,7) circle (7pt);
\fill (7,4) circle (7pt);
\fill (8,3) circle (7pt);
\draw[dashed,thick, red] (5.5,-1)--(5.5,9);
\end{tikzpicture}$$


The set $S(i)$ can be thought of as the set of permutations in $K_L(r,n)$ for which $i$ is the smallest value appearing right of position $r$.  Note that, since $|\Left_r(w)|=r$, $S(i)$ is empty for all $i>r$ .  Since every permutation has some smallest value appearing right of position $r$, we get
\[K_L(r,n)=\bigsqcup_{i=1}^{r} S(i).\]
Fix $1\leq i<r$ and suppose that $w\in S(i)$.  Then the permutation matrix of $w$ can be divided into regions $A, B, C,D$ as follows:
$$\begin{tikzpicture}[scale=0.4]
\fill[lightgray] (10,0) rectangle (5,4);
\draw[thick] (0,0) -- (0,10) -- (10,10) -- (10,0) -- (0,0);
\draw[thick] (0,4) -- (10,4);
\draw[thick] (5,0) -- (5,10);
\fill (2,10) circle (7pt);
\fill (10,4) circle (7pt);
\draw (2.5,7.5) node {$A$};\draw (7.5,7.5) node {$B$}; \draw (2.5,2) node {$C$};
\draw (7.5,2) node {$D$};
\draw (-1,4) node {$j$};\draw (-1,10) node {$n$};
\draw (5,-1) node {$r$};
\end{tikzpicture}$$

Given a node $(k,w(k))$ in the permutation matrix of $w$, we will refer to $k$ as the position of the node and $w(k)$ as the value of the node.  Since $k\in \Left_r(w)$ for all $k<i$, we must have that region $D$ is empty (in other words, no nodes of the form $(k,w(k))$ appear in region $D$).  Since region $D$ is empty, the nodes in region $C$ can have any relative order with $w$ still avoiding $3|12$ and $23|1$ with respect to position $r$.  Moreover, the set of values of the nodes in region $C$ are exactly $\{1,2,\ldots, i-1\}$.  Observe that there are $(r)_{i-1}$ ways to assign $\{1,2,\ldots,i-1\}$ to the values $\{w(1),\ldots,w(r)\}$ in region $C$.  Any such assignment determines the positions of the nodes found in region $C$ and hence, it will also determine the positions of the nodes found in region $A$ (but not the values).  Next we determine the number of ways to select values in region $A$.  First note that  $n\in\Left_r(w)$ and $i\in\Right_r(w)$, and hence $n$ must the value of some node in region $A$, while $i$ is a value of some node in region $B$.  This leaves $\displaystyle\binom{n-i-1}{r-i}$ ways to select the remaining values for the nodes in region $A$.  Furthermore, since region $D$ is empty, any such choice uniquely determines the values of the nodes in region $B$.  Since $n\in\Left_r(w)$ and $w$ avoids $3|12$ with respect to position $r$, the values of the nodes in region $B$ must be decreasing.  Similarly, since $i\in\Right_r(w)$ and $w$ avoids $23|1$ with respect to position $r$, the values of the nodes in region $A$ are also decreasing.  The fact that the nodes in both regions $A$ and $B$ must be arranged in decreasing order implies
\[|S(i)|=\binom{n-i-1}{r-i}\cdot (r)_{i-1}\]
and thus
\[|K_L(r,n)|=\sum_{i=1}^{r}\binom{n-i-1}{r-i}\cdot (r)_{i-1}.\]
\end{proof}


\begin{proof}[Proof of Theorem \ref{T:main1}]
Fix $1\leq r\leq n$ and observe that $|K_L(r,r)|=k(r,r)=r!$.  Corollary \ref{C:K2K_L} and Proposition \ref{P:K_L_formula} imply that
\begin{align*}
k(r,n)&=r!(n-r)!+\sum_{j=0}^{n-r-1}\left(\sum_{i=1}^{r}  \binom{n-j-i-1}{r-i}\cdot (r)_{i-1}\right)\cdot (n-r)_j\\
&=r!(n-r)!+\sum_{i=1}^{r}\sum_{j=1}^{n-r}\binom{n-i-j}{r-i}\cdot (r)_{i-1}\cdot(n-r)_{j-1}
\end{align*}

\end{proof}
\section{Generating functions}
In this section we prove Theorem \ref{T:main2}.  Our first step will be to isolate the summation term from the formula found in Theorem \ref{T:main1}.  We show that this summation term has a nice recursive structure.  For any $r,s\geq 0$, we define
\begin{equation}\label{D:asr}
a(r,s):=\frac{k(r,r+s)}{r!\cdot s!}-1.
\end{equation}
Note that $a(0,0)=0$ since $k(0,0)=1$.  We also have $a(r,s)=a(s,r)$ by the symmetry $k(r,r+s)=k(s,r+s)$.

\begin{lemma}\label{L:recursion}
The values $a(r,s)$ satisfy the recursion
\begin{equation}\label{Eq:recursion}
a(r,s)=a(r,s-1)+a(r-1,s)-a(r-1,s-1)+\binom{r+s-2}{r-1}\frac{1}{r!\, s!}
\end{equation}
with initial conditions $a(r,0)=a(0,s)=0$ for all $r,s\geq 0$.  
\end{lemma}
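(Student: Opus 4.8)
The plan is to first convert the definition of $a(r,s)$ into an explicit double sum using Theorem \ref{T:main1}, and then to recognize the recursion \eqref{Eq:recursion} as nothing more than the second mixed difference of a two-dimensional partial sum. Setting $n=r+s$ in Theorem \ref{T:main1} gives $k(r,r+s)=r!\,s!+\sum_{i=1}^{r}\sum_{j=1}^{s}\binom{r+s-i-j}{r-i}(r)_{i-1}(s)_{j-1}$, so dividing by $r!\,s!$ and subtracting $1$ kills the leading term and produces a clean double sum for $a(r,s)$. The key simplification is the falling-factorial identity $(r)_{i-1}/r!=1/(r-i+1)!$ (and likewise $(s)_{j-1}/s!=1/(s-j+1)!$), which removes all factorials of $r$ and $s$ from the summand.

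Next I would reindex by $p=r-i$ and $q=s-j$, under which the limits become $0\le p\le r-1$ and $0\le q\le s-1$, the binomial becomes $\binom{p+q}{p}$, and the two factorials become $1/(p+1)!$ and $1/(q+1)!$. This exhibits $a(r,s)$ as the partial sum $a(r,s)=\sum_{p=0}^{r-1}\sum_{q=0}^{s-1} b(p,q)$, where $b(p,q):=\binom{p+q}{p}\frac{1}{(p+1)!\,(q+1)!}$. For any such doubly indexed partial sum, the combination $a(r,s)-a(r,s-1)-a(r-1,s)+a(r-1,s-1)$ telescopes to the single term $b(r-1,s-1)$: the difference $a(r,s)-a(r,s-1)$ collects exactly the row $q=s-1$ with $0\le p\le r-1$, while $a(r-1,s)-a(r-1,s-1)$ collects the same row truncated to $0\le p\le r-2$, and subtracting leaves only $b(r-1,s-1)$. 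Evaluating $b(r-1,s-1)=\binom{r+s-2}{r-1}\frac{1}{r!\,s!}$ then reproduces precisely \eqref{Eq:recursion}.

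Finally, the initial conditions $a(r,0)=a(0,s)=0$ follow immediately, since the corresponding double sum is empty (equivalently, $k(r,r)=r!$ forces $a(r,0)=0$, and symmetry gives $a(0,s)=0$). I do not expect a genuine obstacle: the only steps requiring care are the bookkeeping of the falling-factorial cancellation and the reindexing, after which the recursion is a formal identity about cumulative sums. The one point worth noting explicitly is that \eqref{Eq:recursion} is asserted only for $r,s\ge 1$, so that $b(r-1,s-1)$ has nonnegative arguments and dovetails with the stated base cases.
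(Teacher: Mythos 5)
Your proof is correct and follows essentially the same route as the paper: both use Theorem \ref{T:main1} with $n=r+s$, the cancellation $(r)_{i-1}/r!=1/(r-i+1)!$, and the reindexing $p=r-i$, $q=s-j$ to exhibit $a(r,s)$ as a double partial sum, from which the recursion is the second mixed difference. The only difference is that you spell out the telescoping step that the paper dismisses with ``immediately follows,'' which is a welcome bit of extra detail but not a different argument.
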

\begin{proof}
First note that $a(r,0)=a(0,s)=0$ since $k(r,r)=r!$.  Now if $r,s>0$, then Theorem \ref{T:main1} implies
\begin{align*}
a(r,s)&=\sum_{i=1}^{r}\sum_{j=1}^{s}\binom{r+s-i-j}{r-i} \frac{(r)_{i-1}(s)_{j-1}}{r!\, s!}\\
&=\sum_{i=1}^{r}\sum_{j=1}^{s}\binom{r+s-i-j}{r-i} \frac{1}{(r-i+1)!(s-j+1)!}.
\end{align*}
Reversing the ordering in the summations gives
\[a(r,s)=\sum_{i=0}^{r-1}\sum_{j=0}^{s-1}\binom{i+j}{i} \frac{1}{(i+1)! (j+1)!}.\]
Equation \eqref{Eq:recursion} immediately follows from this formula.
\end{proof}
Define the ordinary generating function
\[\mathcal{A}(x,y):=\sum_{r,s=0}^{\infty} a(r,s)\, x^r y^s.\]
Lemma \ref{L:recursion} implies
\begin{equation}\label{Eq:gen_func1}
\mathcal{A}(x,y)=\frac{\mathcal{L}(x,y)}{1-x-y+xy}
\end{equation}
where
\[\mathcal{L}(x,y):=\sum_{r,s=0}^{\infty}\binom{r+s-2}{r-1}\frac{x^r y^s}{r!\, s!}.\]
For the second step, we study the function $\mathcal{L}(x,y)$.  Taking the mixed partial derivative of $\mathcal{L}(x,y)$ yields a bivariate exponential generating function for binomial coefficients:
\begin{equation}\label{Eq:gen_func2}
\partial_{xy}(\mathcal{L}(x,y))=\sum_{r,s=0}^{\infty} \binom{r+s}{r}\frac{x^r y^s}{r!\, s!}.
\end{equation}
Let \[I_0(z)=\frac{1}{\pi}\int_0^\pi e^{z\cos\theta}d\theta\] denote the modified Bessel function of first kind.  It is well known that $I_0(z)$ has generating function

\[I_0(z)=\sum_{m=0}^\infty \frac{1}{(m!)^2}\left(\frac{z^2}{4}\right)^m.\]

\begin{proposition}\label{P:binom_gen_fun}
The exponential generating function for binomial coefficients is given by
\begin{equation}\label{Eq:Bessel}
\sum_{r,s=0}^{\infty} \binom{r+s}{r}\frac{x^r y^s}{r!\, s!}=e^{x+y}\cdot I_0(2\sqrt{xy}).
\end{equation}
\end{proposition}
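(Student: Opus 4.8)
The plan is to expand both sides of Equation \eqref{Eq:Bessel} as formal power series in $x$ and $y$, extract and compare the coefficients of $x^r y^s$, and reduce the resulting numerical identity to Vandermonde's convolution. This is exactly the ``refinement with respect to Vandermonde's identity'' anticipated in the introduction.

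First I would use the series expansion of the Bessel function. Substituting $z=2\sqrt{xy}$ into Equation \eqref{D:Bessel_def} gives
\[I_0(2\sqrt{xy})=\sum_{m=0}^\infty \frac{1}{(m!)^2}\left(\frac{(2\sqrt{xy})^2}{4}\right)^m=\sum_{m=0}^\infty \frac{(xy)^m}{(m!)^2}.\]
Combining this with the factorization $e^{x+y}=\bigl(\sum_{a\geq 0}\frac{x^a}{a!}\bigr)\bigl(\sum_{b\geq 0}\frac{y^b}{b!}\bigr)$, I would form the product of these three series and collect the coefficient of $x^r y^s$. A monomial $x^r y^s$ arises precisely by choosing an index $m$ from the Bessel factor together with $x^{r-m}/(r-m)!$ and $y^{s-m}/(s-m)!$ from the two exponential factors, where $0\leq m\leq\min(r,s)$. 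Hence the coefficient of $x^r y^s$ in $e^{x+y}I_0(2\sqrt{xy})$ is
\[\sum_{m=0}^{\min(r,s)}\frac{1}{(m!)^2\,(r-m)!\,(s-m)!}.\]

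Next I would compare this expression with the coefficient $\binom{r+s}{r}\frac{1}{r!\,s!}$ appearing on the left-hand side of Equation \eqref{Eq:Bessel}. Multiplying both coefficients by $r!\,s!$ and using the elementary identity $\frac{r!\,s!}{(m!)^2(r-m)!\,(s-m)!}=\binom{r}{m}\binom{s}{m}$, the claim reduces to
\[\sum_{m=0}^{\min(r,s)}\binom{r}{m}\binom{s}{m}=\binom{r+s}{r}.\]
Finally, rewriting $\binom{s}{m}=\binom{s}{s-m}$ recasts the left side as the standard Vandermonde convolution $\sum_m\binom{r}{m}\binom{s}{s-m}=\binom{r+s}{s}=\binom{r+s}{r}$, which completes the argument.

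I do not expect a genuine obstacle here: the proof is essentially a coefficient-extraction computation. The only points requiring care are bookkeeping the three-fold Cauchy product correctly (in particular tracking the range $0\leq m\leq\min(r,s)$ coming from the nonnegativity of the exponents $r-m$ and $s-m$) and recognizing the resulting double-binomial sum as a Vandermonde convolution. Once the coefficient extraction is set up cleanly, the identity follows immediately.
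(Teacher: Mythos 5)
Your proposal is correct and follows essentially the same route as the paper: expand $I_0(2\sqrt{xy})$ via its power series, form the Cauchy product with $e^{x+y}$, and reduce the resulting double-binomial sum to Vandermonde's identity. The only cosmetic difference is that you extract the coefficient of $x^ry^s$ directly, while the paper reorganizes the triple sum by the substitution $r=a+m$, $s=b+m$ before applying Vandermonde — the underlying computation is identical.
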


\begin{proof}
Expanding the right hand side of Equation \eqref{Eq:Bessel} gives
\begin{align*}
e^{x+y}\cdot I_0(2\sqrt{xy})&=\left(\sum_{a,b=0}^{\infty}\frac{x^ay^b}{a!\, b!}\right)\left(\sum_{m=0}^\infty \frac{(xy)^m}{(m!)^2}\right)\\
&=\sum_{a,b,m=0}^{\infty}\binom{a+m}{m}\binom{b+m}{b}\frac{x^{a+m}y^{b+m}}{(a+m)!(b+m)!}.
\end{align*}
If we re-index the summands by setting $r=a+m$ and $s=b+m$ and apply Vandermonde's identity on binomials, we get
\begin{align*}
e^{x+y}\cdot I_0(2\sqrt{xy})&=\sum_{r,s=0}^{\infty}\left(\sum_{m=0}^{\min(r,s)}\binom{r}{m}\binom{s}{s-m}\right)\frac{x^{r}y^{s}}{r!\, s!}\\
&=\sum_{r,s=0}^{\infty}\binom{r+s}{s}\frac{x^{r}y^{s}}{r!\, s!}.
\end{align*}

\end{proof}

\begin{proof}[Proof of Theorem \ref{T:main2}]
By the definition given in Equation \eqref{D:asr}, we have
\begin{align*}
\mathcal{K}(x,y)&=\sum_{r,s=0}^{\infty}k(r,r+s) \frac{x^{r}y^{s}}{r!\, s!}\\
&=\sum_{r,s=0}^{\infty}(a(r,s)+1)\, x^{r}y^{s}\\
&=\mathcal{A}(x,y)+\frac{1}{1-x-y+xy}.
\end{align*}
Equation \eqref{Eq:gen_func1} implies
\[\mathcal{K}(x,y)=\frac{\mathcal{L}(x,y)+1}{1-x-y+xy}.\]
The theorem now follows from Proposition \ref{P:binom_gen_fun} and Equation \eqref{Eq:gen_func2}.
\end{proof}
\bibliographystyle{amsalpha}
\bibliography{split_bib.bib}
\end{document}